\documentclass[11pt,letterpaper]{article}

\usepackage[T1]{fontenc}
\usepackage[utf8]{inputenc}
\usepackage{lmodern}
\usepackage{microtype}
\usepackage[margin=1in]{geometry}
\usepackage{amsmath,amssymb,amsthm,mathtools}
\usepackage{booktabs,tabularx,array}
\usepackage{enumitem}
\usepackage{xcolor}
\usepackage{xurl}
\usepackage{listings}
\usepackage[hidelinks]{hyperref}
\hypersetup{
  pdftitle={A constructive two-parameter Ramsey increment},
  pdfauthor={Milos Tatarevic}
}

\allowdisplaybreaks
\setlist{itemsep=0.25em,topsep=0.4em}

\newtheorem{theorem}{Theorem}[section]
\newtheorem{lemma}[theorem]{Lemma}
\newtheorem{corollary}[theorem]{Corollary}
\theoremstyle{remark}

\lstdefinelanguage{Lean}{
  morekeywords={theorem,def,lemma,structure,namespace,end,where,by,have,show,from,exact,match,with,if,then,else,let,in},
  sensitive=true,
  morecomment=[l]{--},
  morestring=[b]"
}
\title{A constructive two-parameter Ramsey increment}
\author{Milos Tatarevic\\
  \small Impossible Research\\
  \small \texttt{milos.tatarevic@gmail.com}}
\date{}

\begin{document}
\maketitle

\begin{abstract}
We prove the inequality \(R(k+1,s+1)\ge R(k,s)+2k+2s\) for classical Ramsey
numbers, valid for all \(5\le k\le s\), and formalize the proof in Lean~4.
As a consequence, we obtain the new lower bound \(R(12,12)\ge1641\).
\end{abstract}

\medskip
\noindent{\small 2020 Mathematics Subject Classification: 05C55, 05D10.}

\section{Introduction}\label{sec:introduction}

The classical Ramsey number \(R(k_1,k_2,\ldots,k_r)\) is the smallest integer
\(n\) such that in any \(r\)-coloring of the edges of the complete graph
\(K_n\) there is a monochromatic copy of \(K_{k_c}\) for some
\(1\le c\le r\).  In this paper we consider two colors, red and blue.  A
coloring with no red \(K_k\) and no blue \(K_s\) will be called a
\emph{\((k,s)\)-graph}.  A critical \((k,s)\)-graph has \(R(k,s)-1\)
vertices.

Xu, Shao, and Radziszowski proved the constructive increment
\cite{XuShaoRadziszowski2011}
\begin{equation}\label{eq:xsr}
  R(a,b+1)\ge R(a,b)+2a-2,
  \qquad a\ge5,\quad b\ge2.
\end{equation}
Applying~\eqref{eq:xsr} twice, with the colors interchanged between the two
applications, gives \(R(12,12)\ge1639\) from the known bound
\(R(11,11)\ge1597\); see \cite{Radziszowski2026}.

In 2017, we constructed a \(1639\)-vertex \((12,12)\)-graph and obtained
\(R(12,12)\ge1640\) \cite{Tatarevic2017}.  The construction used a
one-defect intermediate coloring, complementation, a second structured lift,
and local switching.  It suggested the general inequality
\begin{equation}\label{eq:2017-conjecture}
  R(k+1,s+1)\ge R(k,s)+2k+2s-1,
  \qquad 5\le k\le s,
\end{equation}
but a proof was not found at that time.  We prove the following stronger
statement.

\begin{theorem}\label{thm:main}
For integers \(5\le k\le s\),
\begin{equation}\label{eq:main}
  R(k+1,s+1)\ge R(k,s)+2k+2s.
\end{equation}
By symmetry, the same inequality holds after interchanging \(k\) and \(s\)
whenever \(\min\{k,s\}\ge5\).
\end{theorem}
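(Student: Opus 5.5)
Starting from a critical \((k,s)\)-graph \(G\) on \(n=R(k,s)-1\) vertices, I would build an explicit \((k+1,s+1)\)-graph on \(n+2k+2s-1\) vertices. The model is the Xu--Shao--Radziszowski lift behind~\eqref{eq:xsr}: there, new vertices are attached to a copy of \(G\) so that their red neighbourhoods are controlled by a vertex of \(G\) and its neighbourhood, and this yields about \(2a-2\) extra vertices per unit increase of one parameter. Here both parameters rise by one at once. So I would add new vertices of two kinds. The first kind behaves "red-like": each such vertex copies the adjacency of a chosen vertex of \(G\) and is joined in red to a suitable set. The second kind behaves "blue-like" and is defined symmetrically with the colours swapped.

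\textbf{Construction.} Fix a vertex \(v\) of \(G\), with red neighbourhood \(N_R(v)\) and blue neighbourhood \(N_B(v)\). Since \(G\) is critical, \(G[N_R(v)]\) has no red \(K_{k-1}\) and no blue \(K_s\), and \(G[N_B(v)]\) has no red \(K_k\) and no blue \(K_{s-1}\). I would introduce about \(2k\) new vertices arranged as paths or cycles whose colours alternate, so that within the new part there are no red triangles and no large blue cliques. Each new vertex is joined to \(G\) by copying the edges of a distinct vertex, while the edges between twin vertices are recoloured. I would then add about \(2s\) further vertices by the mirror construction. The target vertex count is \(n+2k+2s-1\), which gives \(R(k+1,s+1)\ge n+2k+2s=R(k,s)+2k+2s\).

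\textbf{Verification.} Take a red clique \(Q\) in the new graph and project it back to \(G\) by replacing each new vertex with the vertex it copies. Twins cannot both lie in \(Q\) unless the edge between them is red, and the cycle structure bounds how many such pairs \(Q\) can contain. I would aim to show that \(Q\) has at most \(k-1\) vertices in \(G\) plus at most one "excess" vertex coming from the new structure, so \(|Q|\le k\). The blue case is handled in the same way, with \(s\) in place of \(k\). The hypothesis \(k\ge5\) should enter when ruling out small configurations: a red clique could otherwise use many new vertices, and since the new part is triangle-free in red only a few can appear, but \(k\) must be large enough to absorb them. The hypothesis \(k\le s\) fixes which side receives the asymmetric \(-1\) correction. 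The final sentence of the theorem, the case with \(k\) and \(s\) interchanged, follows at once by swapping the two colours.

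\textbf{Main obstacle.} The hard step is the mixed interaction: cliques that use new vertices of both kinds at once. To get the bound \(+2k+2s\), rather than the \(+2k+2s-1\) of~\eqref{eq:2017-conjecture}, I expect the edges between the two families to need a careful colouring pattern, probably a circulant or "offset" rule. My first attempt would be to colour the edge between the \(i\)-th red-like vertex and the \(j\)-th blue-like vertex according to the parity of \(i+j\), and then do a case analysis on how many vertices of each kind a monochromatic clique contains. Because this case analysis is finite but intricate, it is the part I would also check in the Lean formalization.
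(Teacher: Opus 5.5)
\textbf{There is a genuine gap.} Your vertex count is one short, and the proposal has no mechanism for the two vertices that are the actual content of the theorem.

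With $n=R(k,s)-1$, a $(k+1,s+1)$-graph on $n+2k+2s-1$ vertices gives only $R(k+1,s+1)\ge n+2k+2s=R(k,s)+2k+2s-1$. That is the weaker inequality \eqref{eq:2017-conjecture}, not \eqref{eq:main}. Your identity $n+2k+2s=R(k,s)+2k+2s$ is false. To prove the theorem you must add $2k+2s$ vertices, producing a graph on $R(k,s)+2k+2s-1$ vertices. Two applications of \eqref{eq:xsr}, one in each colour, already add $(2k-2)+2s$ vertices. So the whole content of the theorem is gaining two vertices over the iterated lift, and your proposal contains nothing that does this. You leave open which vertices are cloned and how the new vertices are coloured among themselves. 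The parity rule on $i+j$ for edges between the two families is a guess. The bound ``at most $k-1$ vertices in $G$ plus one excess'' is exactly what has to be proved; a projection argument cannot give it, because edges among new vertices are not determined by $G$.

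As described, the design also breaks down when $s$ is large compared with $k$. Suppose the second family of about $2s$ vertices is, as you say, the colour-swapped mirror of a first family with no red triangle. Then it has no blue triangle. So it contains a red $K_{k+1}$ as soon as its order reaches $R(3,k+1)$; for $k=5$ and $s\ge10$ this already happens, since $R(3,6)=18$. The paper uses the opposite assignment. The first lift $A\cup B$ ($2k-2$ vertices, cloning a red $K_{k-1}$ called $C$) has no blue triangle and red clique number $k-1$. The complementary lift $X\cup Y$ ($2s$ vertices) has no red triangle and blue clique number $s$.

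The ideas you are missing are these:
\begin{itemize}
\item Each lift is anchored on a clique rather than on a single vertex $v$. The neighbourhood facts you list are never used.
\item One needs a red $K_{k-1}$, $C$, and a disjoint blue $K_{s-1}$, $I$, extracted using \eqref{eq:xsr}. Then the second lift can be applied to the blue $K_s$ $I\cup\{a_0\}$ of the first-lift graph.
\item Two connectors are added. The first, $z$, is a clone of $b_0$ followed by the switches (9)--(10). The second, $w$, is added after the old-edge edits (14A) or (14B).
\end{itemize}
The safety of both connectors depends on the incidence pattern between $C$ and $I$. This pattern is normalised to Type A or Type B by Lemma~\ref{lem:orientation}, possibly after complementing the colouring and exchanging the certificates. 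That lemma and the Type B bounds (7a) are where $k\le s$ is genuinely used; it does not merely decide which side absorbs a $-1$.
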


The proof improves two applications of~\eqref{eq:xsr} by two vertices.  The
first is the connector underlying the 2017 construction.  The second follows
from a stronger certificate-orientation lemma and a branch-dependent edit of
the coupled lifts.  The proof is constructive.  We restate and verify the
special Xu--Shao--Radziszowski lift directly; the published inequality
\eqref{eq:xsr} is used only to obtain the initial disjoint certificates.

The construction has also been formalized in Lean~4
\cite{deMouraUllrich2021,Mathlib2020}.  The formalization covers certificate
extraction, the four orientation cases, both lifts, both connectors, all
clique-exclusion arguments, and the final vertex count.  The Lean sources,
executable construction, verification scripts, and graph certificates are
available in the public repository \cite{TatarevicRepo2026}.

\section{Development of the construction}\label{sec:history}

\subsection{The 2017 construction}

The 2017 bound was not obtained by appending one vertex to an existing
\((12,12)\)-graph.  The construction first modified a
Xu--Shao--Radziszowski lift so that only a controlled defect remained, applied
a complementary structured lift, and then removed the remaining defects by
local switching.  This gave one more vertex than two standard applications
of~\eqref{eq:xsr}, but the construction depended on a particular large
coloring and a search-generated switching pattern.

The proof below identifies the uniform data in that construction.  A red
\(K_{k-1}\) and a disjoint blue \(K_{s-1}\) are used as certificates.  After
possibly complementing the coloring and exchanging the certificates, their
incidence matrix has one of two normal forms, Type~A or Type~B.  The first
connector is common to both forms.  A separate old-edge edit in each form
then permits a second connector.

\subsection{AI-assisted development}

The investigation did not begin with a direct request to prove
\eqref{eq:2017-conjecture}.  It began with a request to review older
Ramsey-search code and constructions, reconstruct the 2017 result from first
principles, examine the same geometry in the smaller Paley \((5,5;37)\) and
\((6,6;101)\) colorings to distinguish structural features from those
dependent on local search, and look for patterns that might lead to more
efficient constructions.  The work alternated between exact computation
and mathematical analysis.  Failed attempts to append a single column or to
remove a common defect core led to the coupled-lift construction used here.

GPT-5.6 Sol, used through Codex, generalized the resulting finite switching
pattern into the first complete version of the proof of
\eqref{eq:2017-conjecture} and its Lean~4 formalization.  After that proof
had been implemented, GPT-5.6 Pro was asked whether the same construction
could contain one more vertex; this led to the strengthened four-way
certificate lemma and the second connector used in Theorem~\ref{thm:main}.
Fable~5 and GPT-5.6 Pro were subsequently used to audit the mathematical
argument.  GPT-5.6 Pro also assisted in preparing and editing this
manuscript.

These systems were used as research, formalization, and writing tools and are
not authors.  The author directed the investigation, selected and checked the
final argument, checked the correspondence between the written and formal
statements, and takes responsibility for the contents of the paper.  Agreement
among the models consulted is not treated as evidence of correctness.  Verification consists of the proof below, the Lean kernel check,
the replay checks described in Section~\ref{sec:lean}, and the exact executable
construction.  Further details of the process and all public artifacts are
available in the repository \cite{TatarevicRepo2026}.

\section{Construction outline}\label{sec:overview}

Let \(G_0\) be a critical \((k_0,s_0)\)-graph, where
\(5\le k_0\le s_0\).  Inequality~\eqref{eq:xsr} and its complement provide a
red \(K_{k_0-1}\) and a disjoint blue \(K_{s_0-1}\).  We use either \(G_0\)
or its color complement as an oriented graph \(G\), with parameters \((k,s)\)
and certificates \(C,I\).  The certificate incidence matrix is then put in
Type~A or Type~B normal form.

The construction has four stages.

\begin{center}
\begin{tabularx}{\textwidth}{@{}>{\raggedright\arraybackslash}p{0.23\textwidth}
                                  >{\raggedright\arraybackslash}p{0.16\textwidth}
                                  X@{}}
\toprule
Stage & Added vertices & Purpose \\
\midrule
First special lift & \(2k-2\) &
Produces a \((k,s+1)\)-graph and a distinguished blue \(K_s\). \\
Complementary lift & \(2s\) &
Lifts the distinguished clique and produces a common
\((k+1,s+1)\)-free base. \\
First connector \(z\) & \(1\) &
Uses the same clone-and-switch construction in Type~A and Type~B. \\
Second connector \(w\) & \(1\) &
Uses a branch-specific old-edge edit followed by projection and
common-neighborhood arguments. \\
\bottomrule
\end{tabularx}
\end{center}

Starting with \(R(k_0,s_0)-1\) vertices, these stages add
\((2k-2)+2s+2=2k_0+2s_0\) vertices.  If the selected orientation is dual, we
complement the final coloring.  The result is a \((k_0+1,s_0+1)\)-graph on
\[
  R(k_0,s_0)+2k_0+2s_0-1
\]
vertices, which proves Theorem~\ref{thm:main}.

\section{Proof}\label{sec:proof}

We now give the construction.  We write \(\overline G\) for color
complementation.  For every switched edge we state its source color; the same
source-color data are used in the Lean formalization.

\subsection{Disjoint certificates and the strengthened orientation lemma}

Fix integers \(5\le k_0\le s_0\). Let \(G_0\) be a critical \((k_0,s_0)\)-graph and put \(N=R(k_0,s_0)-1\). The XSR inequality and its complement give

\[
\begin{aligned}
R(k_0,s_0)&\ge R(k_0,s_0-1)+2k_0-2,\\[0pt]
R(k_0,s_0)&\ge R(k_0-1,s_0)+2s_0-2.
\end{aligned}
\qquad\text{(4)}
\]

Consequently,

\[
|G_0|=R(k_0,s_0)-1\ge R(k_0-1,s_0),
\]

so \(G_0\) contains a red \(K_{k_0-1}\). Choose one and call it \(C_0\). After deleting \(C_0\), at least

\[
R(k_0,s_0)-k_0
 \ge R(k_0,s_0-1)+k_0-2
 \ge R(k_0,s_0-1)
\]

vertices remain. The remainder contains no red \(K_{k_0}\), so it contains a blue \(K_{s_0-1}\); call it \(I_0\). Thus \(C_0\cap I_0=\varnothing\).

We need a stronger ordering than the one used for the one-connector construction. It is convenient to allow a simultaneous interchange of the colors and of the two certificates.

\subsubsection{Oriented certificate systems}

An \textbf{orientation} of the certificate pair means either

\[
(\Gamma,r,t,C,I)=(G_0,k_0,s_0,C_0,I_0)
\]

or

\[
(\Gamma,r,t,C,I)=(\overline{G_0},s_0,k_0,I_0,C_0).
\]

In either orientation, \(\Gamma\) is an \((r,t)\)-graph, \(C\) is a red \(K_{r-1}\), \(I\) is a blue \(K_{t-1}\), the two certificates are disjoint, and \(r,t\ge5\).

After ordering

\[
C=\{c_0,\ldots,c_{r-2}\},
\qquad
I=\{i_0,\ldots,i_{t-2}\},
\]

define

\[
\beta(i_q)=\{p:c_pi_q\text{ is blue}\}.
\]

Every \(\beta(i_q)\) is nonempty, since otherwise \(C\cup\{i_q\}\) would be a red \(K_r\). Every row \(c_p\) has a red neighbor in \(I\), since otherwise \(I\cup\{c_p\}\) would be a blue \(K_t\).

We use the following two normal forms.

\textbf{Type A.} The certificates are ordered so that

\[
\begin{gathered}
c_0i_0\text{ and }c_1i_0\text{ are red},\\[0pt]
P:=\beta(i_0)\ne\varnothing,
\qquad
P\subseteq\{2,\ldots,r-2\},
\qquad
2\in P,
\end{gathered}
\qquad\text{(5A)}
\]

and \(c_1\) is not the unique blue neighbor in \(C\) of any vertex of \(I\).

\textbf{Type B.} Here \(r\le t\), and the certificates are ordered so that

\[
\beta(i_p)=\{p\},
\qquad 0\le p\le r-2.
\qquad\text{(5B)}
\]

\subsubsection{Strengthened orientation lemma}

\begin{lemma}[Strengthened orientation lemma]\label{lem:orientation}
One of the two orientations admits either Type A or Type B. In the Type B
alternative the oriented parameters satisfy \(r\le t\).
\end{lemma}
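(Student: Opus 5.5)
The plan is a case analysis on the incidence matrix between \(C_0\) and \(I_0\). I will work in the orientation \((G_0,k_0,s_0,C_0,I_0)\) and translate every condition on the dual orientation back into this matrix. Complementation swaps the colors and swaps rows with columns. So in the dual orientation the sets \(\beta'\) are the sets of \emph{red} entries of the rows \(c\in C_0\), and the roles of ``column'' and ``row'' are exchanged.

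Call a row \(c\in C\) \emph{essential} if it is the unique blue neighbor in \(C\) of some \(i\in I\), that is, \(\beta(i)=\{c\}\). Call a column \(i\in I\) \emph{dually essential} if it is the unique red neighbor in \(I\) of some row. The plan has three steps.

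\textbf{Step 1 (Type B).} Suppose every row is essential. For each \(c_p\) choose a column \(i_p\) with \(\beta(i_p)=\{p\}\). These columns are pairwise distinct, since their singletons differ, so \(r-1\le t-1\) and (5B) holds after ordering. This gives Type B in the original orientation with \(r\le t\). Symmetrically, suppose every column is dually essential. Then Type B holds in the dual orientation, whose parameters \((t,r)\) satisfy \(t\le r\). This is exactly the required inequality there.

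\textbf{Step 2 (Type A).} Otherwise there are a non-essential row \(c^*\) and a non-dually-essential column \(i^*\). If some column \(i_0\) has \(c^*\) and another row among its red entries, relabel \(c_1:=c^*\) and let \(c_0\) be the other red row. Then label some blue row of \(i_0\) as \(c_2\); this is possible since \(\beta(i_0)\neq\varnothing\) and \(r-1\ge4\). This yields (5A). The same test, applied in the dual to the column \(i^*\), gives dual Type A whenever some row has \(i^*\) and another column among its blue entries.

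\textbf{Step 3 (contradiction).} It remains to rule out the situation where neither step applies. Then every red column of \(c^*\) has \(c^*\) as its only red entry, and every blue row of \(i^*\) has \(i^*\) as its only blue entry. I would inspect the entry \(c^*i^*\).

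If \(c^*i^*\) is blue, then \(i^*\) is the only blue entry of row \(c^*\). So \(c^*\) is red against all of \(I\setminus\{i^*\}\), and each such column is red only at \(c^*\). For \(t-1\ge4\), these columns are blue at every row except \(c^*\). I would then exploit that \(c^*\) is non-essential while \(i^*\) is not dually essential to reach a contradiction. One route is to exhibit a column with two red entries, one of them non-essential. Another is to produce a red \(K_r\) or blue \(K_t\) from \(C\cup\{i\}\) or \(I\cup\{c\}\).

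If \(c^*i^*\) is red, then \(c^*\) is the only red entry of column \(i^*\), and an analogous argument applies with colors exchanged.

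I expect this last closing-off step to be the main obstacle. The degenerate matrices that survive Steps 1 and 2 must be shown to be impossible or to fall back into Type A after a different choice of \(c^*\) or \(i^*\). If a direct argument stalls, I would count essential rows and dually essential columns. I would compare \(|C|-|\text{essential}|\) with the number of columns having at least two red entries, and use the constraints that each column has a blue entry and each row has a red entry.
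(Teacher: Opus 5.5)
Your Steps 1 and 2 are correct. Distinct witnesses give primal Type B with \(r\le t\), or dual Type B with \(t\le r\). A column that is red at the non-essential row \(c^*\) and at one further row gives Type A, and dually for \(i^*\). The gap is Step 3, which is where the content of the lemma lies. You list possible routes (a column with two red entries, a forbidden clique, a count of essential rows) but carry out none of them. As written, nothing excludes a matrix that survives Steps 1 and 2.

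No clique construction or counting is needed. The two certificate facts finish each case in a line: every column has a blue entry in \(C\), and every row has a red entry in \(I\).
\begin{itemize}
\item If \(c^*i^*\) is blue, you have already derived that every row \(c\ne c^*\) is blue on \(I\setminus\{i^*\}\). Since \(c\) has a red neighbour in \(I\), \(ci^*\) is red. Hence \(\beta(i^*)=\{c^*\}\), so \(c^*\) is essential, a contradiction.
\item If \(c^*i^*\) is red, the failed primal test makes \(c^*\) the only red entry of column \(i^*\), so every \(c\ne c^*\) is blue at \(i^*\). The failed dual test then makes \(i^*\) the only blue entry of each such \(c\), so \(c\) is red on \(I\setminus\{i^*\}\). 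Any column \(i\ne i^*\) must therefore take its blue entry at \(c^*\). This gives \(\beta(i)=\{c^*\}\), again contradicting the choice of \(c^*\).
\end{itemize}

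With this completion your route is a valid alternative to the paper's. The paper first splits on whether a unique column exists. Such a column is red at all but one of the \(r-1\ge4\) rows, hence at any uncovered row, which gives Type A at once. It then reduces failure of an orientation to ``every column has at least two blue and at most one red entry''. It closes by double counting blue incidences, \(2(s_0-1)\le k_0-1\), which uses \(k_0\le s_0\). Your argument instead gets a local contradiction from the single pair \((c^*,i^*)\). It is symmetric in the two orientations and never uses \(k_0\le s_0\); the ordering in the Type B clause comes for free from the distinct witnesses.
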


\begin{proof}
First examine the original orientation. Call a column \(i_q\) \textbf{unique} if \(|\beta(i_q)|=1\). Its sole blue row is its owner.

Suppose a unique column exists. If every row owns at least one unique column, choose one witness for every row. The witnesses are distinct, and because \(k_0\le s_0\) they can be placed in positions \(0,\ldots,k_0-2\). This is Type B.

Otherwise choose a unique column as \(i_0\), call its owner \(c_2\), and choose an uncovered row as \(c_1\). The column \(i_0\) is red to every row except its owner. Since \(r\ge5\), there is another red row, which we call \(c_0\). The remaining rows may be ordered arbitrarily. Then \(P=\beta(i_0)=\{2\}\), and the uncovered row \(c_1\) is not the unique blue neighbor of any column. This is Type A.

Now suppose there is no unique column. If some column has at least two red incidences, it also has at least two blue incidences, because its blue set is nonempty and is not a singleton. Choose two red rows as \(c_0,c_1\), a blue row as \(c_2\), and this column as \(i_0\). After relabeling one blue row as \(c_2\), condition (5A) holds. Since there are no unique columns, \(c_1\) is automatically uncovered. Thus Type A again holds.

The only way the original orientation can fail to yield either form is therefore:

\[
\text{every column has at least two blue and at most one red incidence.}
\qquad\text{(6)}
\]

Apply the same procedure to the dual orientation. A dual column is an original row; blue incidences in the dual are red incidences in the original. If the dual procedure finds a unique or mixed column, it gives Type A or Type B exactly as above. A dual Type B requires \(s_0-1\le k_0-1\); together with \(k_0\le s_0\) this forces \(k_0=s_0\), so its oriented parameters are ordered.

If the dual procedure also failed, then every original row would have at least two red and at most one blue incidence. Counting original blue incidences by columns and by rows would give

\[
2(s_0-1)\le k_0-1,
\]

contrary to \(5\le k_0\le s_0\). Hence at least one orientation has Type A or Type B.
\end{proof}

Choose an orientation and ordering supplied by Lemma~\ref{lem:orientation}. For the remainder of the oriented construction, write its graph and parameters simply as \(G,k,s,C,I\). Thus \(k,s\ge5\); in Type B we additionally have \(k\le s\). At the end we return to the original parameters \(k_0,s_0\).

Define

\[
\begin{aligned}
T&=\{q:c_0i_q\text{ is red}\},\\[0pt]
E&=\{q\in T:c_pi_q\text{ is red for every }p\ge2\},\\[0pt]
S&=T\setminus E,
\qquad
U=\{0,\ldots,s-2\}\setminus T.
\end{aligned}
\qquad\text{(7)}
\]

In Type A, \(E=\varnothing\) and \(0\in S\). Indeed, a vertex counted by \(E\) would have \(c_1\) as its unique blue neighbor in \(C\); and \(i_0\in T\setminus E\) because \(c_0i_0\) is red while \(c_2i_0\) is blue.

In Type B,

\[
0\in U,
\qquad
1\in E,
\qquad
\{2,\ldots,k-2\}\subseteq S.
\qquad\text{(7B)}
\]

For the Type B counts put

\[
E_{\rm lg}=E\cap\{2,\ldots,s-2\},
\qquad
U_{\rm lg}=U\cap\{2,\ldots,s-2\},
\qquad
m=|E_{\rm lg}|.
\]

Among the \(s-3\) ordinary large indices, the \(k-3\) indices \(2,\ldots,k-2\) lie in \(S\). Therefore

\[
m\le s-k,
\qquad
|U_{\rm lg}|\le s-k-m.
\qquad\text{(7a)}
\]

\subsection{The two standard lifts}

Apply the special Xu--Shao--Radziszowski lift to \(G\) and \(C\). In explicit form, add

\[
A=\{a_0,\ldots,a_{k-2}\},\qquad
B=\{b_0,\ldots,b_{k-2}\}.
\]

The colors not already present in \(G\) are:

\begin{enumerate}
\item
  \(A\) is red-complete;
\item
  \(B[\{b_0,b_1\}]\) and \(B[\{b_2,\ldots,b_{k-2}\}]\) are red-complete, while edges between these two parts are blue;
\item
  \(a_pc_p\) is red and every other edge from \(a_p\) to \(G\) is blue;
\item
  \(b_pc_p\) is blue and \(\mathrm{col}(b_pv)=\mathrm{col}(c_pv)\) for \(v\in G-\{c_p\}\);
\item
  \(a_pb_q\) is blue for \(p=q\) and red otherwise.
\end{enumerate}

Call the resulting graph \(L\). For completeness, we verify directly that it is a \((k,s+1)\)-graph.

Let \(K\) be a red clique in \(L\), and put

\[
P=\{p:a_p\in K\},\qquad Q=\{q:b_q\in K\},\qquad D=K\cap G.
\]

The set \(Q\) lies in one of the two \(B\)-parts, and \(P\cap Q\) is empty. If \(|P|\ge2\), then \(D=\varnothing\), because no vertex of \(G\) is red to two distinct vertices of \(A\); hence \(|K|=|P|+|Q|\le k-1\). If \(P=\{p\}\), then \(D\subseteq\{c_p\}\), while \(Q\) avoids \(p\) and has order at most \(k-3\) (the small part has order two and \(k\ge5\)); again \(|K|\le k-1\). Finally, if \(P=\varnothing\), then for every \(q\in Q\), the matched vertex \(c_q\) is not in \(D\), and the clone rule shows that

\[
D\cup\{c_q:q\in Q\}
\]

is a red clique in \(G\). Thus \(|D|+|Q|\le k-1\). Therefore \(L\) has no red \(K_k\).

Now let \(K\) be blue. It contains at most one vertex of \(A\) and at most one vertex from each \(B\)-part. If \(a_p\in K\), then the only possible \(B\)-vertex is \(b_p\). With \(b_p\), the set \(K\cap G\) is disjoint from \(c_p\) and is blue-complete to \(c_p\), so it has order at most \(s-2\); without \(b_p\), it has order at most \(s-1\). In both cases \(|K|\le s\). If \(K\cap A=\varnothing\) and it uses at most one \(B\)-vertex, the same bound is immediate. If it uses two, they have indices in opposite \(B\)-parts. Then \(K\cap G\) excludes each matched \(c_q\), and adjoining either one of them gives a blue clique in \(G\); hence \(|K\cap G|\le s-2\) and again \(|K|\le s\). Thus \(L\) has no blue \(K_{s+1}\).

The set

\[
J=I\cup\{a_0\}
\]

is blue-complete in \(L\). Label its elements by

\[
j_q=i_q\quad(0\leq q\leq s-2),\qquad
j_*=a_0,\quad *=s-1.
\]

Apply the complementary special lift to \(L\) and \(J\). It adds

\[
X=\{x_0,\ldots,x_*\},\qquad
Y=\{y_0,\ldots,y_*\}.
\]

In the original red/blue convention its rules are:

\begin{enumerate}
\item
  \(X\) is blue-complete;
\item
  \(Y[\{y_0,y_1\}]\) and \(Y[\{y_2,\ldots,y_*\}]\) are blue-complete, while edges between these parts are red;
\item
  \(x_rj_r\) is blue and every other edge from \(x_r\) to \(L\) is red;
\item
  \(y_rj_r\) is red and \(\mathrm{col}(y_rv)=\mathrm{col}(j_rv)\) for \(v\in L-\{j_r\}\);
\item
  \(x_ry_t\) is red for \(r=t\) and blue otherwise.
\end{enumerate}

Indeed, \(\overline L\) is an \((s+1,k)\)-graph and \(J\) is a red \(K_s\) in \(\overline L\). Applying the verified lift with \(a=s+1\) and then complementing back proves that the resulting graph \(F\) is a \((k+1,s+1)\)-graph.

\subsection{Add and switch the missing vertex}\label{subsec:add-z}

Add a vertex \(z\). Initially make it a core-clone of \(b_0\):

\[
\mathrm{col}(zv)=\mathrm{col}(b_0v)
\quad(v\in F-\{b_0\}),
\qquad b_0z\text{ red}.
\qquad\text{(8)}
\]

Starting with (8), make the following switches:

\[
\begin{array}{ll}
\text{(i)}&
 a_0y_q:\ {\rm blue}\to{\rm red},\quad
 b_0y_q:\ {\rm red}\to{\rm blue}
 \quad(q\in T),\\[2mm]
\text{(ii)}&
 b_0y_*:\ {\rm blue}\to{\rm red},\qquad
 b_0b_2:\ {\rm blue}\to{\rm red},\\[2mm]
\text{(iii)}&
 b_0x_1,b_0x_*:\ {\rm red}\to{\rm blue},\\[2mm]
\text{(iv)}&
 zx_r:\ {\rm red}\to{\rm blue}
 \quad(r\notin\{1,*\}).
\end{array}
\qquad\text{(9)}
\]

If \(E\ne\varnothing\) (which can happen only in Type B), make the additional switches

\[
\begin{aligned}
a_0x_q &: {\rm red}\to{\rm blue} &&(q\in E),\\[0pt]
y_1y_* &: {\rm red}\to{\rm blue},\\[0pt]
y_1y_q &: {\rm red}\to{\rm blue} &&(q\in E,\ q\ge2).
\end{aligned}
\qquad\text{(10)}
\]

Denote the final graph by \(H\).

\subsection{Switching lemma}\label{subsec:switching}

We prove that \(H\) has no red \(K_{k+1}\) and no blue \(K_{s+1}\). The proof below is included in detail because this is the new part of the argument.

Every switch in (9)--(10) has the displayed source color. Indeed, \(a_0y_q\) is initially blue, \(b_0y_q\) has the color of \(c_0i_q\), \(b_0y_*\) and \(b_0b_2\) are initially blue, every \(b_0x_r\) and \(zx_r\) is initially red, and the exceptional edges in (10) are red by the matching and two-part rules. Thus the construction is well-defined.

Before the switches, adjoining the red-joined clone \(z\) can create no blue \(K_{s+1}\). It can create a red \(K_{k+1}\) only if that clique contains both \(b_0\) and \(z\): if a monochromatic clique contains \(z\) but not \(b_0\), replacing \(z\) by \(b_0\) gives a clique of the same order in \(F\). After the switches, a new forbidden clique must therefore contain one of the following monochromatic edges:

\[
\begin{array}{c|l}
\text{red}
  & b_0z,\ a_0y_q\ (q\in T),\ b_0y_*,\ b_0b_2\\[3pt]
\text{blue}
  & b_0y_q\ (q\in T),\ b_0x_1,\ b_0x_*\\
  & zx_r\ (r\notin\{1,*\}),\ a_0x_q\ (q\in E)\\
  & y_1y_r\quad
    \bigl(E\ne\varnothing,\ 
    r\in(E\cap\{2,\ldots,s-2\})\cup\{*\}\bigr).
\end{array}
\qquad\text{(11)}
\]

For a colored edge \(uv\), a forbidden clique through \(uv\) would require a clique of order \(k-1\) (red) or \(s-1\) (blue) in its common same-color neighborhood. We bound those common-neighborhood clique numbers one row at a time.

\subsubsection{Red edges}

\textbf{(R1) \(b_0z\).} Its common red neighborhood is contained in

\[
N_R^L(b_0)
=N_R^G(c_0)\cup(A-\{a_0\})\cup\{b_1\}.
\]

This set has no red \(K_{k-1}\), since adjoining \(b_0\) would give a red \(K_k\) in \(L\).

\textbf{(R2) \(a_0y_q\), \(q\in T\).} First suppose \(q\in S\). Put

\[
\begin{aligned}
B_q&=\{b_p:p\ne0,\ c_pi_q\text{ is red}\},\\[0pt]
Y_q&=N_R(a_0)\cap N_R(y_q)\cap Y.
\end{aligned}
\]

Every common red neighbor of \(a_0,y_q\) belongs to

\[
\{c_0\}\cup B_q\cup\{x_q\}\cup Y_q.
\qquad\text{(12a)}
\]

A red clique in \(B_q\) uses either \(b_1\), or vertices from \(\{b_2,\ldots,b_{k-2}\}\), but not both. Since \(q\in S\), at least one \(c_pi_q\), \(p\ge2\), is blue, so the latter choice supplies at most \(k-4\) vertices. Thus

\[
\omega_R(B_q)\le\max(1,k-4).
\]

The set \(Y_q\) is blue-complete. In Type A, all its vertices lie in the one \(Y\)-part opposite to \(y_q\). In Type B, \(q\in S\) implies \(q\ge2\), and the only possible common red \(Y\)-vertex from the small part is \(y_1\); there is no second common red \(Y\)-vertex. Finally, \(x_qy_r\) is blue for every \(r\ne q\), so \(x_q\) is blue to every vertex of \(Y_q\). A red clique therefore uses at most one vertex from \(\{x_q\}\cup Y_q\). Its total order is at most

\[
1+\max(1,k-4)+1\leq k-2.
\]

Now let \(q\in E\). Then \(\beta(i_q)=\{1\}\), so \(B_q=\{b_2,\ldots,b_{k-2}\}\), a red clique of order \(k-3\). The switch \(a_0x_q\) removes \(x_q\) from the common red neighborhood. If \(q=1\), the switches in (10) also remove \(y_*\) and every large exceptional \(y_e\). The remaining common \(Y\)-vertices have indices in \(S\), lie in the large \(Y\)-part, and hence are pairwise blue. If one such \(y_r\) is used, the definition of \(S\) gives a \(p\ge2\) with \(c_pi_r\) blue, so \(b_p\) is unavailable; the order is at most

\[
1+(k-4)+1=k-2.
\]

If no such \(Y\)-vertex is used, the order is at most \(1+(k-3)=k-2\). If \(q\ge2\), the only possible common \(Y\)-vertices before the exceptional switch are \(y_0,y_1\); the first is not red to \(a_0\), and the second was removed by (10). The order is again at most \(1+(k-3)=k-2\).

\textbf{(R3) \(b_0y_*\).} Its common red neighborhood is

\[
(A-\{a_0\})\cup\{b_1,b_2\}.
\]

The vertices \(b_1,b_2\) are blue-adjacent; furthermore \(a_pb_p\) is blue. A red clique in this set has order at most \(k-2\).

\textbf{(R4) \(b_0b_2\).} Its common red neighbors lie in

\[
D\cup(A-\{a_0,a_2\})
\cup(X-\{x_1,x_*\})\cup\{y_*\},
\qquad\text{(12)}
\]

where

\[
D=N_R^G(c_0)\cap N_R^G(c_2).
\]

A red clique in \(D\) has order at most \(k-3\), since \(D\cup\{c_0,c_2\}\) would otherwise contain a red \(K_k\) in \(G\). A red clique uses at most one \(X\)-vertex, and no such vertex is red to \(y_*\). Write \(A'=A-\{a_0,a_2\}\) and \(X'=X-\{x_1,x_*\}\). The exhaustive possibilities are:

\begin{itemize}
\item
  if \(y_*\) is used, no vertex of \(D\) or \(X'\) is used, giving at most \(|A'|+1=k-2\);
\item
  if one \(X'\)-vertex and no \(A'\)-vertex are used, \(D\) contributes at most \(k-3\), again giving \(k-2\);
\item
  if one \(X'\)-vertex and one \(a_p\in A'\) are used, at most the single core vertex \(c_p\) can accompany \(a_p\), giving at most \(3\le k-2\);
\item
  if at least two \(A'\)-vertices are used, no vertex of \(D\) can be used, so the clique has at most \(|A'|+1=k-2\) vertices.
\item
  in the remaining cases there is no \(X'\)-vertex or \(y_*\), and either the clique lies in \(D\), or one \(A'\)-vertex restricts its core contribution to the single vertex \(c_p\); these cases are smaller.
\end{itemize}

Thus every subcase has order at most \(k-2\).

Thus no red edge in (11) can lie in a red \(K_{k+1}\).

\subsubsection{Blue edges}

Before (10), the \(Y\)-graph is the disjoint union of two blue cliques, on indices \(\{0,1\}\) and \(\{2,\ldots,*\}\). The only additional blue \(Y\)-edges are those from \(y_1\) to \(y_*\) and to the large exceptional vertices \(y_e\), \(e\in E\), \(e\ge2\). Also, a blue clique contains at most one vertex of \(\{b_2,\ldots,b_{k-2}\}\), since this set is red-complete.

\textbf{(B1) \(b_0y_q\), \(q\in T\).} Its common blue neighbors lie in:

\begin{itemize}
\item
  \(D=N_B^G(c_0)\cap N_B^G(i_q)\);
\item
  the vertices \(b_p\), \(p\ge3\), for which \(c_pi_q\) is blue (a blue clique can use at most one of them, since they lie in the red-complete large \(B\)-part);
\item
  \(x_1,x_*\), excluding \(x_q\);
\item
  the vertices \(y_r\) in the same \(Y\)-part as \(y_q\), excluding \(y_q\), together, when \(E\ne\varnothing\), with the switched cross-part neighbors between \(y_1\) and the large exceptional vertices.
\end{itemize}

Suppose a common blue clique uses an \(X\)-vertex. The only possibilities are \(x_1\) and \(x_*\), with \(x_q\) omitted. Among vertices of \(L-\{b_0\}\), the only blue neighbor of \(x_*\) is \(a_0\), and \(a_0\) is not common to \(b_0,y_q\). Apart from the endpoint \(b_0\), and apart from \(a_0\) in Type B (which is again not common to \(b_0,y_q\)), the only blue neighbor of \(x_1\) in \(L\) is \(i_1\). We now count the compatible \(Y\)-vertices.

\begin{itemize}
\item
  If \(q\ge2\), the ordinary common \(Y\)-vertices are the other \(s-4\) vertices of the large part. Both \(x_1,x_*\) together with these vertices give order at most \(2+(s-4)=s-2\). With only \(x_1\), the possible old vertex \(i_1\) replaces the missing \(x_*\), so the same bound holds. If \(q\in E\), the additional common vertex \(y_1\) is incompatible with \(x_1\), and with \(x_*\) it can be accompanied only by the other large exceptional vertices; (7a) keeps this subcase below \(s-2\).
\item
  If \(q=1\), only \(x_*\) is available. A blue \(Y\)-clique is either \(\{y_0\}\) or a subset of the \(m\) large exceptional vertices, so its total order is at most \(1+\max(1,m)\le s-2\).
\item
  If \(q=0\), necessarily Type A applies. The only common \(Y\)-vertex is \(y_1\); direct inspection gives order at most two, which is at most \(s-2\).
\end{itemize}

Thus every common blue clique using an \(X\)-vertex has order at most \(s-2\).

Suppose no \(X\)-vertex is used. If no \(b_p\) is used, map each \(y_r\) to \(i_r\); together with the core vertices and \(i_q\), this would be a blue clique in \(G\). Hence there are at most \(s-2\) common vertices. If \(b_p\) is used, map it to \(c_p\). The vertex \(c_p\) is not in \(D\), because \(c_0c_p\) is red. The same map, together with \(i_q\), again gives a blue clique in \(G\), so the bound is \(s-2\). For \(q\in E\) there is no such \(b_p\), since \(\beta(i_q)=\{1\}\), so this is an easier subcase.

For completeness, this projection is injective. The vertex \(i_q\) is not in \(D\), and the endpoint \(y_q\) is absent. If \(i_r\in D\), then \(i_ry_r\) is red by the matched-pair rule, so a blue clique cannot contain both vertices. If \(b_p\) is present, its image \(c_p\) is not in \(D\), as observed above; and \(C\cap I=\varnothing\). Every image edge is blue by the clone rules.

\textbf{(B2) \(b_0x_1\).} If \(E=\varnothing\), the common blue neighborhood is contained in

\[
\{i_1,x_*\}\cup(Y-\{y_1,y_*\}).
\]

The vertices \(i_1,x_*\) are red-adjacent, and a blue clique uses only one \(Y\)-part, giving at most \(s-2\).

If \(E\ne\varnothing\), the additional common vertex \(a_0\) can occur, but then only \(Y\)-indices from \(U\) can occur with it. By (7), the large \(Y\)-part omits the safe indices \(2,\ldots,k-2\), and the small part contains only the index 0 besides the exceptional index 1. Without \(a_0\), the previous \(s-2\) bound is unchanged. With \(a_0\), a clique containing both \(a_0,x_*\) has order at most

\[
2+\max(1,|U_{\rm lg}|)\le s-2
\]

by (7a); omitting \(x_*\) cannot increase the bound.

\textbf{(B3) \(b_0x_*\).} Its common blue neighbors lie in

\[
\{a_0,x_1\}\cup\{y_q:0\le q\le s-2\}.
\]

If \(E=\varnothing\), \(a_0x_1\) is red. If \(E\ne\varnothing\), they are blue-adjacent, but their common \(Y\)-indices lie in \(U\), whose large part omits \(2,\ldots,k-2\) by (7). A clique which uses the switched cross-part edges consists of \(y_1\) and large exceptional vertices. It cannot then use either \(a_0\) or \(x_1\), and has order at most \(1+|E\cap\{2,\ldots,s-2\}|\le1+s-k\). All other cliques use one original \(Y\)-part. The large part together with \(x_1\) has order at most \(s-2\); if \(a_0\) is also used, only large \(U\)-indices remain and their number is at most \(s-k\). The small part gives the same or a smaller bound.

\textbf{(B4) \(zx_r\), \(r\notin\{1,*\}\).} Apart from a possible core vertex \(i_r\), a common blue clique is described by disjoint index sets

\[
\begin{aligned}
P&\subseteq\{0,\ldots,*\}\setminus\{1,*,r\}
&&\text{for its }X\text{-vertices},\\[0pt]
Q&\subseteq(\{*\}\cup U)\setminus\{r\}
&&\text{for its }Y\text{-vertices},
\end{aligned}
\]

The set \(Q\) lies in one original \(Y\)-part. In Type A there are no switched cross-part \(Y\)-edges. In Type B, \(1\notin U\), so \(y_1\) is not common to \(z,x_r\), and every switched cross-part edge is irrelevant. Also \(P\cap Q=\varnothing\), because \(x_ty_t\) is red. If \(|P|+|Q|\geq s-1\), the indices 1 and \(*\), neither of which can belong to \(P\), would both have to belong to \(Q\); this is impossible because they lie in opposite \(Y\)-parts. Thus \(|P|+|Q|\leq s-2\).

The core vertex \(i_r\) can occur only for \(r\in U\), and it is blue to no common \(X\)-vertex. If \(r\ge2\), a large-part \(Q\) has at most \(s-3\) vertices because the index \(r\) is absent, while a small-part \(Q\) has at most two; adjoining \(i_r\) therefore gives at most \(s-2\). If \(r=0\), Type A has \(0\in S\), so \(i_0\) does not occur. In Type B, a large-part \(Q\) has order at most \(1+|U_{\rm lg}|\le1+s-k-m\), and the small part is empty after index 0 is removed. Adjoining \(i_0\) is again below \(s-2\).

There is one further possible common vertex: \(a_0\), when \(r\in E\). Here \(r\ge2\). A clique containing \(a_0\) can use only the \(m-1\) other large exceptional \(X\)-vertices. Its common \(Y\)-indices lie in \(U\). With the large \(U\)-part the total order is at most

\[
1+(m-1)+(s-k-m)=s-k,
\]

and with the small part it is at most

\[
1+(m-1)+1=m+1\le s-k+1.
\]

Both bounds are at most \(s-2\).

\textbf{(B5) \(a_0x_q\), \(q\in E\).} A possible common core vertex is only \(i_q\). It is red to every common \(X\)-vertex and to the possible common clone (\(b_0\) for \(q=1\), \(z\) for \(q\ge2\)). Thus a clique containing \(i_q\) consists otherwise of \(Y\)-vertices with indices in \(U\), all in one part. Its order is at most

\[
1+\max\{1,|U_{\rm lg}|\}\le s-2.
\]

Without \(i_q\), the common \(X\)-indices are contained in \(\{*\}\cup(E-\{q\})\), and the common \(Y\)-indices lie in \(U\). In Type B the safe indices \(2,\ldots,k-2\) are absent from both sets. If \(m=|E\cap\{2,\ldots,s-2\}|\), there are at most \(m+1\) common \(X\)-indices, while the large \(U\)-part has at most \(s-k-m\) vertices and the small \(U\)-part consists only of index 0. Thus, without a clone, the large-part total is at most \(s-k+1\) and the small-part total is at most \(m+2\le s-k+2\).

If \(q=1\), the possible clone \(b_0\) is compatible only with \(x_*\) among the common \(X\)-vertices. A clique containing \(b_0\) therefore has order at most

\[
2+\max\{1,|U_{\rm lg}|\}\le s-2,
\]

where the two displayed vertices are \(b_0,x_*\). This estimate is sharp in the smallest case: when \(k=s=5\), the three vertices \(b_0,x_*,y_0\) can all be common blue neighbors.

If \(q\ge2\), the possible clone \(z\) is compatible with the \(m-1\) other large exceptional \(X\)-vertices, but not with \(x_*\) or \(x_1\). With the large \(U\)-part the total is at most

\[
1+(m-1)+(s-k-m)=s-k,
\]

and with the small part it is at most

\[
1+(m-1)+1=m+1\le s-k+1.
\]

Both are at most \(s-2\).

\textbf{(B6) \(y_1y_r\), where \(E\ne\varnothing\) and \(r\in(E\cap\{2,\ldots,s-2\})\cup\{*\}\).} A forbidden clique containing \(b_0\) is impossible by (B1) (and \(b_0y_*\) is red), so assume \(b_0\) is absent. If no \(X\)-vertex is used, keep every vertex of \(L\) fixed and map every \(y_t\) to \(j_t\). No \(j_t\) can already occur together with \(y_t\), because \(j_ty_t\) is red. Thus this maps the two endpoints and their common blue clique injectively to a blue clique in \(L\); order \(s+1\) is impossible.

If \(X\)-vertices are used, let \(P\) be their index set; thus \(P\) avoids \(1,r\). Put

\[
W=\big((E\cap\{2,\ldots,s-2\})\cup\{*\}\big)-\{r\}.
\]

The common \(Y\)-indices are contained in \(W-P\). Hence the total number of \(X\)- and \(Y\)-vertices is at most \(|P|+|W-P|=|P\cup W|\le s-2\), because neither 1 nor \(r\) belongs to \(P\cup W\). No vertex outside \(X\cup Y\) that is blue to both endpoints can be blue to two members of \(P\). Indeed, every vertex of \(L-\{a_0,b_0\}\) is blue to at most one \(X\)-vertex, namely \(x_t\) when the vertex is \(j_t\). The remaining possibilities are unavailable: \(a_0y_1\) is red, \(b_0\) has already been excluded, and \(zy_1\) is red. Thus the case \(|P|\ge2\) has no additional vertex. With one \(X_t\), at most the single old vertex \(j_t\) can also occur. Moreover \(|W|\le s-k\) by (7a). Hence this last subcase has order at most \(1+1+(s-k)\le s-2\).

No blue edge in (11) can therefore lie in a blue \(K_{s+1}\). This proves the switching lemma: \(H\) is a \((k+1,s+1)\)-graph.

Observe that the Type A part of this switching proof used only \(k,s\ge5\). The ordering \(k\le s\) entered only in the Type B bounds (7a). This distinction is needed when Lemma~\ref{lem:orientation} selects the dual Type A orientation.

\subsection{The second connector in Type A}

Assume the oriented certificates have Type A. Retain

\[
P=\beta(i_0),
\qquad
Q=\{2,\ldots,k-2\}\setminus P.
\qquad\text{(13A)}
\]

Thus \(P\ne\varnothing\), \(2\in P\), and \(|Q|\le k-4\).

We will repeatedly use the following elementary collision facts. In a red clique, replacing a present vertex \(b_q\) by \(c_q\) cannot collide with a retained vertex, because \(b_qc_q\) is blue. In a blue clique, replacing a present \(y_q\) by \(i_q\) cannot collide with a retained vertex, because \(y_qi_q\) is red. Adjacency preservation in each projection is checked from the clone and matching rules in the corresponding subcase. Whenever \(z,x_*\), or a small \(B\)-vertex is replaced below, the text records the additional endpoint and collision checks needed for that map.

\subsubsection{Four additional switches}

Starting from the \((k+1,s+1)\)-graph \(H\) constructed in Subsection~\ref{subsec:add-z} and verified in Subsection~\ref{subsec:switching}, make the following additional switches among its old vertices:

\[
\begin{aligned}
c_px_* &: \text{red}\longrightarrow\text{blue}
&& (p\in P),\\[0pt]
i_0a_2 &: \text{blue}\longrightarrow\text{red},\\[0pt]
y_0y_* &: \text{red}\longrightarrow\text{blue},\\[0pt]
y_0y_2 &: \text{red}\longrightarrow\text{blue}.
\end{aligned}
\qquad\text{(14A)}
\]

Call the resulting graph on the old vertex set \(\widetilde H_A\). Every displayed source color is forced by the lift rules:

\begin{itemize}
\item
  among vertices of \(L\), \(x_*\) is blue only to \(j_*=a_0\), so \(c_px_*\) is red;
\item
  among vertices of \(G\), \(a_2\) is red only to \(c_2\), so \(i_0a_2\) is blue;
\item
  \(y_0\) lies in the small \(Y\)-part and \(y_2,y_*\) lie in the large part, so both cross-part edges are red.
\end{itemize}

\subsubsection{\texorpdfstring{Add the vertex \(w\)}{Add the vertex w}}

Add one vertex \(w\). Its colors to the original core \(G\) are

\[
\begin{aligned}
wc_p&\text{ is red} &&(0\le p\le k-2),\\[0pt]
wi_0&\text{ is blue},\\[0pt]
\mathrm{col}(wv)&=\mathrm{col}(i_0v)
&&\bigl(v\in G\setminus(C\cup\{i_0\})\bigr).
\end{aligned}
\qquad\text{(15A)}
\]

Its colors to the fringe are:

\[
\begin{array}{c|c}
\text{blue from }w&\text{red from }w\\[3pt]
a_2&A\setminus\{a_2\}\\
 b_0,b_1,\ \{b_p:p\in P\}&\{b_q:q\in Q\}\\
 x_0,\ldots,x_{s-2}&x_*\\
 \{y_q:0\le q\le s-2,\ q\notin\{0,2\}\}&y_0,y_2,y_*\\
 &z
\end{array}
\qquad\text{(16A)}
\]

Denote the final graph by \(H_A^+\).

\subsubsection{The additional old-edge switches are safe}\label{subsec:typeA-old-edges}

Since \(H\) is already a \((k+1,s+1)\)-graph, a forbidden clique of \(\widetilde H_A\) must contain an edge in (14A). We inspect the common same-color neighborhoods in \(\widetilde H_A\).

\textbf{(A-R) The red edge \(i_0a_2\).} Its common red neighborhood is contained in

\[
\{b_p:p\notin P\}
 \cup (X\setminus\{x_0\})\cup\{z\}.
\qquad\text{(17A)}
\]

The relevant \(B\)-vertices are the red small pair \(b_0,b_1\) and the red large clique \(\{b_q:q\in Q\}\); no red clique uses both parts. The special switch \(b_0b_2\) is irrelevant because \(2\notin Q\). A red clique uses at most one \(X\)-vertex. The vertex \(z\) is blue to every \(b_q\), \(q\in Q\), and is blue to every \(X\)-vertex except \(x_1,x_*\); for those two exceptions \(b_0\) is blue to the \(X\)-vertex. Therefore a common red clique has order at most

\[
\max\{3,\ |Q|+1\}\le k-2.
\]

Thus \(i_0a_2\) lies in no red \(K_{k+1}\).

\textbf{(A-B1) A blue edge \(c_px_*\), \(p\in P\).} Its common blue neighborhood is contained in

\[
\{a_0\}\cup\{y_q:c_pi_q\text{ is blue}\}.
\qquad\text{(18A)}
\]

Among the displayed \(Y\)-vertices, a blue clique is contained in one original \(Y\)-part, except for the single new cross-part pair \(y_0y_2\). Hence it has order at most \(s-3\). Including \(a_0\) gives at most \(s-2\) common blue vertices, so no such edge lies in a blue \(K_{s+1}\).

\textbf{(A-B2) The blue edge \(y_0y_*\).} Its common blue neighborhood is

\[
N_B^G(i_0)
 \cup (X\setminus\{x_0,x_*\})\cup\{y_2\}.
\qquad\text{(19A)}
\]

Let \(K\) be a blue clique in this set.

\begin{itemize}
\item
  If \(K\) uses at least two \(X\)-vertices, it uses no old vertex of \(G\). Its \(X\)-indices, together with the possible index 2 supplied by \(y_2\), are distinct; hence \(|K|\le s-2\).
\item
  If it uses exactly one \(x_r\), it uses at most the matched old vertex \(i_r\) and possibly \(y_2\). Thus \(|K|\le3\le s-2\).
\item
  If it uses no \(X\)-vertex and omits \(y_2\), then \(K\subseteq N_B^G(i_0)\). Adjoining \(i_0\) gives a blue clique in \(G\), so \(|K|\le s-2\).
\item
  If it uses \(y_2\) and no \(X\)-vertex, its core part lies in \(N_B^G(i_0)\cap N_B^G(i_2)\). Adjoining both \(i_0,i_2\) gives a blue clique in \(G\), so the core part has order at most \(s-3\) and again \(|K|\le s-2\).
\end{itemize}

\textbf{(A-B3) The blue edge \(y_0y_2\).} Put

\[
D=N_B^G(i_0)\cap N_B^G(i_2),
\]

and

\[
B_{02}=\{b_p:p\ge2,\ c_pi_0\text{ and }c_pi_2\text{ are blue}\}.
\]

Its common blue neighborhood is contained in

\[
D\cup(A\setminus\{a_0\})\cup\{b_0\}\cup B_{02}
 \cup(X\setminus\{x_0,x_2\})\cup\{y_*\}.
\qquad\text{(20A)}
\]

A blue clique uses at most one vertex of \(A\) and at most one vertex of \(B_{02}\).

If at least two \(X\)-vertices occur, no ordinary old vertex can occur. The only exception is \(b_0\), and it is blue to two \(X\)-vertices only for the pair \(x_1,x_*\); in that case \(y_*\) is unavailable. Otherwise the \(X\)-indices and the possible \(y_*\) index give at most \(s-2\) vertices. Hence this case is safe.

With exactly one \(X\)-vertex \(x_r\) and \(r\ne *\), at most its matched old vertex \(i_r\) can occur, with the possible additional exception \(b_0\) when \(r=1\). The edge \(b_0y_*\) is red, so together with the possible \(y_*\) the total is at most three. If the sole \(X\)-vertex is \(x_*\), then \(y_*\) is unavailable. The switches \(c_px_*\) allow core vertices \(c_p\) with \(p\in P\), but these form a red clique, and none is blue to \(b_0\); hence at most one additional vertex occurs. In every case the order is at most three, and therefore at most \(s-2\).

Suppose no \(X\)-vertex occurs. If \(y_*\) occurs, no \(A\)- or \(B\)-vertex occurs, and the remaining vertices lie in \(D\). Adjoining \(i_0,i_2\) to the core part gives a blue clique in \(G\); therefore the total is at most \(s-2\).

Finally suppose neither an \(X\)-vertex nor \(y_*\) occurs. If \(b_0\) is absent, keep the vertices of \(D\cup A\cup B_{02}\) in the first-lift graph \(L\) and adjoin \(i_0,i_2\). This gives a blue clique in \(L\) of order \(|K|+2\). Suppose \(b_0\) occurs. The only \(A\)-vertex blue to \(b_0\) is \(a_0\), but \(a_0\) is not in the common neighborhood in (20A); hence \(K\) contains no \(A\)-vertex. If no large \(B\)-vertex occurs, replace \(b_0\) by \(a_0\) before adjoining \(i_0,i_2\). If \(b_0,b_p\) occur with \(b_p\in B_{02}\), replace \(b_0\) by \(a_p\). Moreover \(c_p\notin K\) because \(b_0c_p\) is red, so \(a_p\) is blue to every retained core vertex. In all cases the resulting set is an injective blue clique of order \(|K|+2\) in \(L\). Since \(L\) has no blue \(K_{s+1}\), we have \(|K|\le s-2\).

Thus \(\widetilde H_A\) remains a \((k+1,s+1)\)-graph.

\subsubsection{\texorpdfstring{Red cliques through \(w\) in Type A}{Red cliques through w in Type A}}\label{subsec:typeA-red-w}

The red neighborhood of \(w\) is

\[
\begin{aligned}
N_R(w)=\;&C
 \cup\bigl(N_R^G(i_0)\setminus C\bigr)
 \cup(A\setminus\{a_2\})\\[0pt]
&\cup\{b_q:q\in Q\}
 \cup\{x_*,y_0,y_2,y_*,z\}.
\end{aligned}
\qquad\text{(21A)}
\]

The three vertices \(y_0,y_2,y_*\) are pairwise blue after (14A), so a red clique \(K\subseteq N_R(w)\) uses at most one of them.

\textbf{A red clique containing \(y_0\).} The vertex \(y_0\) is blue to \(x_*\) and to every \(A\)-vertex except \(a_0\). First suppose \(a_0\) is absent. If \(z\) is also absent, map

\[
y_0\longmapsto i_0,
\qquad
b_q\longmapsto c_q\quad(q\in Q),
\]

and keep every core vertex fixed. The map is injective: a red clique cannot contain both \(b_q\) and \(c_q\) because \(b_qc_q\) is blue, and \(i_0\notin K\) because \(wi_0\) is blue. The clone rules show that the image is a red clique in \(G\) of the same order. Hence \(|K|\le k-1\). If \(z\) occurs, no \(b_q\), \(q\in Q\), occurs because \(zb_q\) is blue. Map \(y_0\mapsto i_0\) and \(z\mapsto c_0\), keeping the core fixed. The vertices \(c_0,i_0\) are red-adjacent, and the image is again an injective red clique in \(G\). Thus \(|K|\le k-1\).

It remains to allow \(a_0\). Then \(z\) is unavailable, and \(c_0\) is the only possible core vertex because \(a_0\) is blue to every vertex of \(G-\{c_0\}\). The remaining possible vertices are in the red clique \(\{b_q:q\in Q\}\). Consequently

\[
|K|\le 1+1+1+|Q|\le k-1,
\]

where the first three terms account for \(y_0,a_0,c_0\).

\textbf{A red clique containing \(y_2\).} Suppose first that \(a_0\) is absent. If \(z\) is absent, map \(y_2\mapsto i_2\) and every occurring \(b_q\mapsto c_q\), keeping the core fixed. If \(z\) occurs, then no \(b_q\), \(q\in Q\), occurs because \(zb_q\) is blue; map \(y_2\mapsto i_2\) and \(z\mapsto c_0\). The required edge \(c_0i_2\) is red precisely because \(zy_2\) is red, and \(c_0\) cannot collide with a retained core vertex because \(zc_0\) is blue. Thus both maps are injective red-clique projections into \(G\). If \(a_0\) occurs, then \(z\) is unavailable and only \(c_0\) can occur from the core. At most the \(|Q|\) large \(B\)-vertices can occur in addition, so again

\[
|K|\le 1+1+1+|Q|\le k-1.
\]

\textbf{A red clique containing \(y_*\).} Here \(z\) is unavailable, and the only possible core vertex is \(c_0\). The \(A\)-indices available from the red neighborhood of \(w\) are all indices except 2; the \(B\)-indices are \(Q\); and a red clique cannot contain both \(a_q\) and \(b_q\).

If \(c_0\) occurs, at most \(a_0\) occurs from \(A\); if \(x_*\) also occurs, even \(a_0\) is unavailable. Since \(|Q|\le k-4\), the order is at most \(k-1\). If \(c_0\) is absent and \(x_*\) is absent, the union of available \(A\)- and \(B\)-indices has order at most \(k-2\); adjoining \(y_*\) gives at most \(k-1\). If \(x_*\) occurs, index 0 is additionally unavailable to \(A\), so the index union has order at most \(k-3\); together with \(x_*,y_*\) the order is again at most \(k-1\).

\textbf{A red clique containing no \(Y\)-vertex.} We split according to \(x_*\) and \(z\).

\begin{itemize}
\item
  If neither occurs, the clique lies in the first-lift graph \(L\), and therefore has order at most \(k-1\).
\item
  If \(z\) occurs but \(x_*\) does not, replace \(z\) by \(b_0\). No \(b_q\), \(q\in Q\), occurs with \(z\), and the exceptional switch \(b_0b_2\) is irrelevant because \(2\notin Q\). Thus this is an injective red clique of the same order in \(L\).
\item
  Suppose \(x_*\) occurs but \(z\) does not. If no \(A\)-vertex occurs, replace \(x_*\) by \(i_0\). Every core vertex \(c_p\) with \(p\in P\) was excluded by the switch \(c_px_*\), and all other possible core and \(B_Q\) vertices are red to \(i_0\); hence the image is a red clique in \(L\). If at least two \(A\)-vertices occur, no core vertex occurs and the matching rule, together with the unavailable indices 0 and 2, gives at most \(k-3\) vertices from \(A\cup B_Q\); adding \(x_*\) is safe. If exactly one \(A\)-vertex occurs, at most its matched core vertex can occur, and \(|Q|\le k-4\); the total is at most \(k-1\).
\item
  Finally suppose both \(x_*\) and \(z\) occur. Then no \(B_Q\)-vertex occurs. With no \(A\)-vertex, map \(x_*\mapsto i_0\) and \(z\mapsto c_0\), obtaining a red clique in \(G\). With at least two \(A\)-vertices, no core vertex occurs and at most \(k-3\) such \(A\)-vertices are available; with exactly one \(A\)-vertex, at most one core vertex occurs. The respective bounds are \(k-1\) and \(4\le k-1\).
\end{itemize}

Thus the red neighborhood of \(w\) contains no red \(K_k\), and no red \(K_{k+1}\) of \(H_A^+\) contains \(w\).

\subsubsection{\texorpdfstring{Blue cliques through \(w\) in Type A}{Blue cliques through w in Type A}}

The blue neighborhood of \(w\) is

\[
\begin{aligned}
N_B(w)=\;&\{i_0\}
 \cup\bigl(N_B^G(i_0)\setminus C\bigr)
 \cup\{a_2,b_0,b_1\}\\[0pt]
&\cup\{b_p:p\in P\}
 \cup\{x_0,\ldots,x_{s-2}\}\\[0pt]
&\cup\{y_q:0\le q\le s-2,\ q\notin\{0,2\}\}.
\end{aligned}
\qquad\text{(22A)}
\]

Let \(K\) be a blue clique in this neighborhood.

\textbf{At least two \(X\)-vertices.} No core vertex, \(a_2\), \(b_1\), or large \(B\)-vertex is blue to two distinct ordinary \(X\)-vertices. The only exception is \(b_0\), which is blue to \(x_1,x_*\) in \(H\), but \(x_*\) is not in \(N_B(w)\). Hence no old vertex occurs. If \(P_X\) and \(P_Y\) are the index sets of the \(X\)- and \(Y\)-vertices, then \(P_X\cap P_Y=\varnothing\) because \(x_qy_q\) is red. Both are subsets of the \(s-1\) ordinary indices, so \(|K|\le s-1\).

\textbf{Exactly one \(X\)-vertex, say \(x_r\).} At most the matched core vertex \(i_r\) occurs. For \(r=1\), the switched neighbor \(b_0\) may also occur, but the compatible \(Y\)-indices then exclude \(0,1,2\) and number at most \(s-4\). Thus this subcase has order at most \(1+2+(s-4)=s-1\). For \(r=0\) or \(r=2\), there are at most \(s-3\) compatible \(Y\)-vertices and one matched old vertex; for every other \(r\) there are at most \(s-4\). All cases give \(|K|\le s-1\).

\textbf{No \(X\)-vertex.} Map every occurring \(y_q\) to \(i_q\) and keep all core vertices fixed. This map is injective because \(i_qy_q\) is red. A blue clique uses at most one of \(b_0,b_1\) and at most one large \(B\)-vertex.

First suppose no small \(B\)-vertex occurs. If \(i_0\notin K\), keep a possible \(a_2\) and a possible large \(b_p\), and adjoin \(i_0\) after the \(Y\)-projection. In the original first-lift graph \(L\), every retained vertex is blue to \(i_0\): in particular \(a_2i_0\) is blue in \(L\), and \(b_pi_0\) is blue for \(p\in P\). We obtain a blue clique of order \(|K|+1\) in \(L\), so \(|K|\le s-1\).

If \(i_0\in K\), the switch \(i_0a_2\) excludes \(a_2\). If no large \(B\)-vertex occurs, the projected set is a blue clique in \(G\). If \(b_p\), \(p\in P\), occurs, replace it by \(c_p\). No collision is possible because every \(c_p\) is red to \(w\) and therefore absent from \(K\). The image is an injective blue clique in \(G\). Again \(|K|\le s-1\).

Finally suppose one small vertex \(b_r\), \(r\in\{0,1\}\), occurs. Then \(i_0\) and \(a_2\) are absent because both \(c_0i_0\) and \(c_1i_0\) are red and \(a_2b_r\) is red. If no large \(B\)-vertex occurs, replace \(b_r\) by \(a_r\); if \(b_p\), \(p\in P\), also occurs, replace \(b_r\) by \(a_p\) and retain \(b_p\). In either case adjoin \(i_0\). The core contains no vertex of \(C\) because \(w\) is red to all of \(C\), so the new \(A\)-vertex is blue to every retained core vertex. The result is an injective blue clique of order \(|K|+1\) in \(L\). Therefore \(|K|\le s-1\).

The blue neighborhood of \(w\) contains no blue \(K_s\). Together with Subsections~\ref{subsec:typeA-old-edges} and~\ref{subsec:typeA-red-w}, this proves that \(H_A^+\) is a \((k+1,s+1)\)-graph.

\subsection{The second connector in Type B}

Assume now that the oriented certificates have Type B, so

\[
\beta(i_p)=\{p\}
\qquad(0\le p\le k-2),
\]

and \(5\le k\le s\). Start from the Type B graph \(H\) constructed in Subsection~\ref{subsec:add-z} and verified in Subsection~\ref{subsec:switching}.

\subsubsection{Construction}

Make two additional switches among the old vertices:

\[
c_2x_*:\text{red}\longrightarrow\text{blue},
\qquad
i_2a_2:\text{blue}\longrightarrow\text{red}.
\qquad\text{(14B)}
\]

The source colors follow from the lift rules. Call the resulting old graph \(\widetilde H_B\).

Add a vertex \(w\). On the original core put

\[
\begin{aligned}
wc_p&\text{ is red} &&(0\le p\le k-2),\\[0pt]
wi_2&\text{ is blue},\\[0pt]
\mathrm{col}(wv)&=\mathrm{col}(i_2v)
&&\bigl(v\in G\setminus(C\cup\{i_2\})\bigr).
\end{aligned}
\qquad\text{(15B)}
\]

On the fringe put

\[
\begin{array}{c|c}
\text{blue from }w&\text{red from }w\\[3pt]
a_2&A\setminus\{a_2\}\\
 b_0,b_1,b_2&\{b_p:3\le p\le k-2\}\\
 x_0,\ldots,x_{s-2}&x_*\\
 \{y_q:0\le q\le s-2,\ q\ne2\}&y_2,y_*\\
 &z
\end{array}
\qquad\text{(16B)}
\]

Call the final graph \(H_B^+\).

\subsubsection{The two additional old-edge switches are safe}

\textbf{(B-R) The red edge \(i_2a_2\).} Its common red neighborhood is contained in

\[
(B\setminus\{b_2\})
 \cup(X\setminus\{x_2\})\cup\{z\}.
\qquad\text{(17B)}
\]

The small pair \(b_0,b_1\) and the large clique \(\{b_3,\ldots,b_{k-2}\}\) cannot be mixed in a red clique, and a red clique uses at most one \(X\)-vertex. The vertex \(z\) is blue to the large \(B\)-vertices and to every \(X\)-vertex except \(x_1,x_*\); for those two exceptions \(b_0\) is blue to the \(X\)-vertex. Hence a common red clique has order at most

\[
\max\{3,(k-4)+1\}\le k-2.
\]

\textbf{(B-B) The blue edge \(c_2x_*\).} Its common blue neighborhood is contained in

\[
\{a_0\}\cup\{y_q:c_2i_q\text{ is blue}\}.
\qquad\text{(18B)}
\]

The witness identities \(\beta(i_0)=\{0\}\) and \(\beta(i_1)=\{1\}\) exclude \(y_0,y_1\). Hence every displayed \(Y\)-vertex lies in the original large \(Y\)-part, and a blue clique there has order at most \(s-3\). With \(a_0\) the total is at most \(s-2\). The exceptional switched edges incident with \(y_1\) are irrelevant because \(y_1\) is not common.

Therefore \(\widetilde H_B\) remains a \((k+1,s+1)\)-graph.

\subsubsection{\texorpdfstring{Red cliques through \(w\) in Type B}{Red cliques through w in Type B}}

The red neighborhood is

\[
\begin{aligned}
N_R(w)=\;&C
 \cup\bigl(N_R^G(i_2)\setminus C\bigr)
 \cup(A\setminus\{a_2\})\\[0pt]
&\cup\{b_q:3\le q\le k-2\}
 \cup\{x_*,y_2,y_*,z\}.
\end{aligned}
\qquad\text{(19B)}
\]

The vertices \(y_2,y_*\) are blue-adjacent, so a red clique uses at most one of them.

If \(y_2\) occurs, \(x_*\) is unavailable and the only possible \(A\)-vertex is \(a_0\). First suppose \(a_0\) is absent. Without \(z\), map \(y_2\mapsto i_2\) and every occurring \(b_q\mapsto c_q\), keeping the core fixed. With \(z\), no such \(b_q\) occurs because \(zb_q\) is blue; map \(y_2\mapsto i_2\) and \(z\mapsto c_0\). Here \(c_0i_2\) is red, and \(c_0\) cannot collide with the retained core because \(zc_0\) is blue. In both cases the image is an injective red clique in \(G\) of the same order. If \(a_0\) occurs, then \(z\) is unavailable and only \(c_0\) can occur from the core. There are at most \(k-4\) red \(B\)-vertices, so

\[
|K|\le 1+1+1+(k-4)=k-1.
\]

If \(y_*\) occurs, \(z\) is absent and the only possible core vertex is \(c_0\). The available \(A\)-indices are all indices except 2, while the available \(B\)-indices are \(3,\ldots,k-2\); a red clique cannot use both \(a_q\) and \(b_q\). If \(c_0\) occurs, at most \(a_0\) occurs from \(A\); if \(x_*\) also occurs, \(a_0\) is unavailable. Since there are \(k-4\) available \(B\)-indices, the order is at most \(k-1\). If \(c_0\) is absent and \(x_*\) is absent, at most \(k-2\) vertices occur from \(A\cup B\), and \(y_*\) gives the final vertex. If \(x_*\) occurs, index 0 is also unavailable to \(A\), so at most \(k-3\) vertices occur from \(A\cup B\); together with \(x_*,y_*\) this again gives \(k-1\).

Suppose no \(Y\)-vertex occurs. We split according to \(x_*\) and \(z\).

\begin{itemize}
\item
  If neither occurs, the clique lies in \(L\) and has order at most \(k-1\).
\item
  If \(z\) occurs but \(x_*\) does not, no red \(B\)-vertex can occur with \(z\). Replace \(z\) by \(b_0\). The switch \(b_0b_2\) is irrelevant because \(b_2\) is not red to \(w\), so this is an injective red clique of the same order in \(L\).
\item
  Suppose \(x_*\) occurs but \(z\) does not. If no \(A\)-vertex occurs, replace \(x_*\) by \(i_2\). The switch \(c_2x_*\) excludes \(c_2\), and every other possible core or red \(B\)-vertex is red to \(i_2\); the image is a red clique in \(L\). If at least two \(A\)-vertices occur, no core vertex occurs. The unavailable indices 0 and 2 leave at most \(k-3\) vertices in \(A\cup B\), so adjoining \(x_*\) is safe. With exactly one \(A\)-vertex, at most its matched core vertex occurs; together with at most \(k-4\) red \(B\)-vertices and \(x_*\), the order is at most \(k-1\).
\item
  Finally suppose both \(x_*\) and \(z\) occur. No red \(B\)-vertex occurs. With no \(A\)-vertex, map \(x_*\mapsto i_2\) and \(z\mapsto c_0\), obtaining an injective red clique in \(G\). With at least two \(A\)-vertices, no core vertex occurs and at most \(k-3\) such \(A\)-vertices are available, giving at most \(k-1\) vertices in total. With exactly one \(A\)-vertex, at most one core vertex occurs, and the total is at most \(4\le k-1\).
\end{itemize}

Every no-\(Y\) subcase therefore has order at most \(k-1\).

Hence no red \(K_{k+1}\) containing \(w\) exists.

\subsubsection{\texorpdfstring{Blue cliques through \(w\) in Type B}{Blue cliques through w in Type B}}

The blue neighborhood is

\[
\begin{aligned}
N_B(w)=\;&\{i_2\}
 \cup\bigl(N_B^G(i_2)\setminus C\bigr)
 \cup\{a_2,b_0,b_1,b_2\}\\[0pt]
&\cup\{x_0,\ldots,x_{s-2}\}
 \cup\{y_q:0\le q\le s-2,\ q\ne2\}.
\end{aligned}
\qquad\text{(20B)}
\]

The only blue \(Y\)-edges between the original small and large parts are those from \(y_1\) to the large exceptional vertices. Their number is controlled by

\[
m=|E_{\rm lg}|\le s-k.
\]

Let \(K\) be a blue clique in \(N_B(w)\).

If at least two ordinary \(X\)-vertices occur, no old vertex occurs. The \(X\)- and \(Y\)-index sets are disjoint by the red diagonal edges \(x_qy_q\), so \(|K|\le s-1\).

Suppose exactly one \(x_r\) occurs. At most the matched core vertex \(i_r\) occurs, with the possible additional switched neighbor \(b_0\) when \(r=1\). A blue clique in the available \(Y\)-vertices is one of:

\begin{itemize}
\item
  a subset of the small pair \(\{y_0,y_1\}\);
\item
  a subset of the large ordinary indices \(3,\ldots,s-2\);
\item
  \(y_1\) together with large exceptional vertices, of order at most \(1+m\le s-4\).
\end{itemize}

For \(r=1\), the diagonal removes \(y_1\), leaving at most \(s-4\) compatible \(Y\)-vertices, so even allowing both \(i_1,b_0\) gives \(|K|\le s-1\). For \(r\ne1\), at most one old vertex occurs and the largest available \(Y\)-clique has order at most \(\max\{2,s-4\}\); since \(s\ge5\), the total is again at most \(s-1\).

Finally suppose no \(X\)-vertex occurs. Map every \(y_q\) to \(i_q\); the exceptional cross-part edges map to edges inside the blue clique \(I\), and the map is injective because \(i_qy_q\) is red.

If no small \(B\)-vertex occurs and \(i_2\notin K\), keep a possible \(a_2,b_2\) and adjoin \(i_2\) in the original first-lift graph \(L\). If \(i_2\in K\), the red switch \(i_2a_2\) excludes \(a_2\); without \(b_2\) the projected set is a blue clique in \(G\), and with \(b_2\) replace it by \(c_2\). No collision is possible because all of \(C\) is red to \(w\).

If one small vertex \(b_r\), \(r\in\{0,1\}\), occurs, then \(i_2\) and \(a_2\) are absent. Without \(b_2\), replace \(b_r\) by \(a_r\) and adjoin \(i_2\). With \(b_2\), the switch \(b_0b_2\) forces \(r=1\); replace \(b_1\) by \(a_2\), retain \(b_2\), and adjoin \(i_2\). Each construction gives an injective blue clique of order \(|K|+1\) in \(L\). Thus \(|K|\le s-1\) in every no-\(X\) subcase.

No blue \(K_{s+1}\) containing \(w\) exists. Therefore \(H_B^+\) is a \((k+1,s+1)\)-graph.

\subsection{Return from the selected orientation}

If Lemma~\ref{lem:orientation} selected the original orientation, the preceding construction directly produces a \((k_0+1,s_0+1)\)-graph.

If the lemma selected the dual orientation, the construction produces an \((s_0+1,k_0+1)\)-graph in the complemented color convention. Complement the final graph. The result is again a \((k_0+1,s_0+1)\)-graph. The number of vertices is unchanged.

The Type A first-connector proof in Subsection~\ref{subsec:switching} used only \(k,s\ge5\); it did not use \(k\le s\). Therefore it applies in the dual Type A branch even when its oriented parameters are \(s_0>k_0\). In Type B the required ordering \(k\le s\) is part of Lemma~\ref{lem:orientation}; a dual Type B can occur only when \(k_0=s_0\).

\subsection{Order calculation}

Let the original critical graph have order

\[
N=R(k_0,s_0)-1.
\]

The first lift adds \(2k-2\) vertices, the complementary second lift adds \(2s\), and the two connectors \(z,w\) add two. In either orientation \(\{k,s\}=\{k_0,s_0\}\), so the final order is

\[
\begin{aligned}
N+(2k-2)+2s+2
 &=R(k_0,s_0)-1+2k_0+2s_0\\[0pt]
 &=R(k_0,s_0)+2k_0+2s_0-1.
\end{aligned}
\]

Thus there is a \((k_0+1,s_0+1)\)-graph on \(R(k_0,s_0)+2k_0+2s_0-1\) vertices. Consequently,

\[
R(k_0+1,s_0+1)\ge R(k_0,s_0)+2k_0+2s_0
\qquad(5\le k_0\le s_0).
\qquad\text{(23)}
\]

This proves the theorem. \(\square\)

\section{The bound for \texorpdfstring{\(R(12,12)\)}{R(12,12)}}\label{sec:r1212}

\begin{corollary}\label{cor:r1212}
\[
  R(12,12)\ge1641.
\]
\end{corollary}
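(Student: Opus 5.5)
The plan is to apply Theorem~\ref{thm:main} once, with \(k=s=11\). Since \(5\le 11\le 11\), the hypotheses hold, and \eqref{eq:main} gives
\[
R(12,12)\ge R(11,11)+2\cdot 11+2\cdot 11=R(11,11)+44.
\]
It then remains to insert the best known lower bound for \(R(11,11)\). The introduction records \(R(11,11)\ge1597\), citing the dynamic survey \cite{Radziszowski2026}. Adding gives \(1597+44=1641\), which is the claim.

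Concretely, the steps are as follows. First, cite the lower bound \(R(11,11)\ge1597\). This means there is an \((11,11)\)-graph on \(1596\) vertices, and the construction in Section~\ref{sec:proof} may start from any such graph, not necessarily a critical one. Second, check that the proof of Theorem~\ref{thm:main} only uses \(N\ge R(k_0,s_0)-1\) through the existence of the disjoint certificates. Those come from \eqref{eq:xsr}, which requires \(N\ge R(10,11)+9\) for the red \(K_{10}\), and a comparable bound for the blue \(K_{10}\) after deleting it. Third, count the final vertices: \(1596+44=1640\), which yields a \((12,12)\)-graph on \(1640\) vertices and hence \(R(12,12)\ge1641\).

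The main obstacle is the second step. The theorem is stated in terms of the unknown exact value \(R(11,11)\), and the monotone direction is harmless: \(R(12,12)\ge R(11,11)+44\ge 1597+44\). So the inequality chain suffices and no explicit starting graph is needed. I would therefore present the corollary as the one-line chain above and only remark that an explicit \(1640\)-vertex witness can be produced by running the construction on a known \(1596\)-vertex \((11,11)\)-graph, as in the repository \cite{TatarevicRepo2026}.
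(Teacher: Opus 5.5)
Your argument is the paper's proof: apply Theorem~\ref{thm:main} with \(k=s=11\), then use \(R(11,11)\ge R(11,11)+44-44\), i.e.\ \(R(12,12)\ge R(11,11)+44\ge1597+44=1641\) with the survey bound \(R(11,11)\ge1597\). Your second step is unnecessary, and as written it is unjustified: the counting that produces the disjoint certificates relies on \(G_0\) being critical, and for an arbitrary \((11,11)\)-graph on \(N\) vertices it would need \(N\ge R(10,11)+10\), which is not known for \(N=1596\) (so for a specific \(1596\)-vertex coloring the certificates have to be found directly, not deduced from \eqref{eq:xsr}) --- but, as you conclude, the monotone chain makes this irrelevant.
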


\begin{proof}
Revision~18 of the dynamic survey records
\(R(11,11)\ge1597\) \cite{Radziszowski2026}.  Theorem~\ref{thm:main} with
\(k=s=11\) gives
\[
 R(12,12)\ge R(11,11)+44\ge1597+44=1641.
\]
\end{proof}

The executable construction maps the known \(1596\)-vertex
\((11,11)\)-coloring to a \(1640\)-vertex \((12,12)\)-coloring.  The adjacency
matrix is the file \path{bounds/r1212_1640.graph} in the repository.

\section{Lean verification}\label{sec:lean}

The formal development is written in Lean~4 and uses mathlib
\cite{deMouraUllrich2021,Mathlib2020}.  Its public numerical theorems are:

\begin{lstlisting}[language=Lean]
theorem RamseyIncrement.ramsey_increment_plus
    {k s : Nat} (hk : 5 <= k) (hks : k <= s) :
    ramseyNumber k s + 2 * k + 2 * s <=
      ramseyNumber (k + 1) (s + 1)

theorem RamseyIncrement.ramsey_increment_plus_symmetric
    {k s : Nat} (hk : 5 <= k) (hs : 5 <= s) :
    ramseyNumber k s + 2 * k + 2 * s <=
      ramseyNumber (k + 1) (s + 1)
\end{lstlisting}

The formalization includes:

\begin{enumerate}
\item a local definition of finite Ramsey numbers and a constructive proof of
      their existence;
\item the special Xu--Shao--Radziszowski lift, proved directly;
\item extraction of disjoint critical certificates and the four-way
      primal/dual, Type~A/Type~B orientation theorem;
\item the two canonical lifts and the first connector;
\item the Type~A and Type~B edits, second-connector neighborhoods, and all
      old-edge and through-\(w\) clique exclusions; and
\item the cardinality calculation and the ordered and symmetric numerical
      conclusions.
\end{enumerate}

The main statements are in
\texttt{lean/RamseyIncrement/MainTheorem.lean}.  The orientation theorem is in
\texttt{lean/RamseyIncrement/PlusCertificates.lean}, and the numerical
dispatch and branch theorems are in
\texttt{lean/RamseyIncrement/SecondConnector/}.

The repository pins the Lean and mathlib revisions.  Its source audit rejects
\texttt{sorry}, \texttt{admit}, \texttt{native\_decide}, project-defined
axioms, \texttt{unsafe} declarations, and \texttt{partial} declarations.  A
theorem-specific audit permits only \texttt{propext},
\texttt{Classical.choice}, and \texttt{Quot.sound}.  The verification workflow
runs the pinned Lean build, the source and axiom audits, the Python tests, a
\texttt{leanchecker} replay, and a revision-pinned replay with the independent
\texttt{nanoda} kernel.  No AI system is part of the trusted
proof base.

\section{Computational checks}\label{sec:computation}

The repository contains a dependency-free Python implementation in
\texttt{scripts/construct.py}.  For an input \((k,s)\)-coloring on \(n\)
vertices with suitable disjoint certificates, it produces a coloring on
\(n+2k+2s\) vertices and a metadata file recording the orientation,
certificate order, distinguished vertices, and all source-checked switches.
The verifier in \texttt{scripts/verify.py} checks the output for forbidden red
and blue cliques.

The bundled tests include the following constructions.

\begin{center}
\begin{tabular}{@{}lll@{}}
\toprule
Source & Selected branch & Verified output \\
\midrule
Paley \(37\), \((5,5)\) & primal Type~A & \((6,6;57)\) \\
Paley \(101\), \((6,6)\) & primal Type~A & \((7,7;125)\) \\
Bundled Type~B source & primal Type~B & \((6,6;30)\) \\
Synthetic certificate core & dual Type~B & \((6,6;28)\) \\
Synthetic \((5,6)\) core & dual Type~A (oriented parameters \((6,5)\)) & \((6,7;31)\) \\
\bottomrule
\end{tabular}
\end{center}

The formulas were also tested by finite enumeration.  All \(35{,}714\)
admissible \(4\times4\) certificate-incidence matrices for
\((k,s)=(5,5)\) were checked: \(35{,}666\) selected primal Type~A, \(24\)
selected primal Type~B, and \(24\) selected dual Type~B, and every output was
\((6,6)\)-free.  The \(360\) direct-orientation failure matrices of size
\(4\times5\) for \((5,6)\) all selected dual Type~A and produced graphs with
no red \(K_6\) and no blue \(K_7\).  Additional random incidence systems,
irregular cores, and the two Paley examples were also checked.

A separate audit of a non-bundled research workspace checked \(55{,}780\)
strengthened constructions across all four orientation branches; every
construction was clean, with no unresolved cases or counterexamples.  The
full counts and the timeout-retry protocol are recorded in
\texttt{audit/empirical/} in the repository.

These computations are not premises of Theorem~\ref{thm:main}.  They are used
to detect indexing, source-color, implementation, and case-analysis errors.

\section*{Data and code availability}

The Lean sources, executable constructor, verification scripts, examples, and
the \(1640\)-vertex \((12,12)\)-coloring are available in the public
repository \cite{TatarevicRepo2026}.

\end{document}